\def\pmod #1{\ ({\rm{mod}}\ #1)}
\def\Z{\Bbb Z}
\def\Q{\Bbb Q}
\def\R{\Bbb R}
\def\F{\Bbb F}
\def\bg{\bigg}
\def\({\bg(}
\def\){\bg)}
\def\sgn{{\rm sgn}}
\def\sgn{{\rm sgn}}
\def\Gal{{\rm Gal}}
\def\ve{\varepsilon}
\def\Ack{\medskip\noindent {\bf Acknowledgments}}
\theoremstyle{plain}
\newtheorem{theorem}{Theorem}
\newtheorem{lemma}{Lemma}
\newtheorem{corollary}{Corollary}
\newtheorem{conjecture}{Conjecture}
\theoremstyle{definition}
\theoremstyle{remark}
\begin{document}
\title[Determinants concerning Legendre symbols]{Determinants concerning Legendre symbols}

\author[H.-L. Wu]{Hai-Liang Wu}

\address {(Hai-Liang Wu) School of Science, Nanjing University of Posts and Telecommunications, Nanjing 210023, People's Republic of China}
\email{\tt whl.math@smail.nju.edu.cn}

\begin{abstract}
The evaluations of determinants with Legendre symbol entries have close relation with character sums over finite fields. Recently, Sun \cite{ffa1} posed some conjectures on this topic. In this paper, we prove some conjectures of Sun and also study some variants. For example, we show the following result:

Let $p=a^2+4b^2$ be a prime with $a,b$ integers and $a\equiv1\pmod4$. Then for the determinant
$$S(1,p):=\det\bigg[\(\frac{i^2+j^2}{p}\)\bigg]_{1\le i,j\le \frac{p-1}{2}},$$
the number $S(1,p)/a$ is an integral square, which confirms a conjecture posed by Cohen, Sun and Vsemirnov.
\end{abstract}

\thanks{2020 {\it Mathematics Subject Classification}.
Primary 11C20; Secondary 11L10, 11R18.
\newline\indent {\it Keywords}. determinants, Legendre symbol, character sums.
\newline \indent Supported by the National Natural Science
Foundation of China (Grant No. 11971222).}

\maketitle
\section{Introduction}	
\setcounter{lemma}{0}
\setcounter{theorem}{0}
\setcounter{corollary}{0}
\setcounter{remark}{0}
\setcounter{equation}{0}
\setcounter{conjecture}{0}
Given an $n\times n$ complex matrix $M=[a_{ij}]_{1\le i,j\le n}$, we often use $\det M$ or $|M|$ to denote the determinant of $M$. The evaluation of determinants with Legendre symbol entries is a classical topic in number theory and in finite fields. Krattenthaler's survey papers \cite{K1,K2} introduce many concrete examples and advanced techniques on determinant calculation.

Let $p$ be an odd prime and let $(\frac{\cdot }{p})$ denote the Legendre symbol. Carlitz \cite{carlitz} studied the following $(p-1)\times(p-1)$ matrix
$$D_p:=\bigg[\(\frac{i-j}{p}\)\bigg]_{1\le i,j\le p-1}.$$
He obtained that the characteristic polynomial of $D_p$ is precisely
$$\bigg{|}xI_{p-1}-D_p\bigg{|}=\(x^2-(-1)^{\frac{p-1}{2}}p\)^{\frac{p-3}{2}}\(x^2-(-1)^{\frac{p-1}{2}}\),$$
where $I_{p-1}$ is the $(p-1)\times(p-1)$ identity matrix.

Along this line, Chapman \cite{chapman} further investigated the following matrices:
$$C_p(x):=\left[x+\(\frac{i+j-1}{p}\)\right]_{1\le i,j\le\frac{p-1}{2}}$$
and
$$C_p^*(x):=\left[x+\(\frac{i+j-1}{p}\)\right]_{1\le i,j\le\frac{p+1}{2}},$$
where $x$ is a variable.
In the case $p\equiv 1\pmod 4$, let $\varepsilon_p>1$ and $h(p)$ be the fundamental unit and class number of the real quadratic field $\mathbb{Q}(\sqrt{p})$ respectively and let $\varepsilon_p^{h(p)}=a_p+b_p\sqrt{p}$ with $2a_p,2b_p\in\Z$. Chapman proved that
 	$$\det C_p(x)=
 \begin{cases}
 (-1)^{(p-1)/4}2^{(p-1)/2}(b_p-a_px)   &\mbox{if}\ p \equiv 1 \pmod 4,\\
 -2^{(p-1)/2}x &\mbox{if} \ p\equiv 3 \pmod 4,
 \end{cases}
 $$	
and that
	$$\det C_p^{*}(x)=
\begin{cases}
 (-1)^{(p-1)/4}2^{(p-1)/2}(pb_px-a_p)   &\mbox{if}\ p \equiv 1 \pmod 4,\\
-2^{(p-1)/2} &\mbox{if} \ p\equiv 3 \pmod 4.
\end{cases}
$$	
Moreover, Chapman \cite{evil} posed a conjecture concerning the determinant of the $\frac{p+1}{2}\times\frac{p+1}{2}$ matrix
$$C=\bigg[\(\frac{j-i}{p}\)\bigg]_{1\le i,j\le\frac{p+1}{2}}.$$
Due to the difficulty of the evaluation on this determinant, he called it ``evil" determinant. Finally this conjecture was confirmed completely by Vsemirnov \cite{M1,M2}.

Recently Sun \cite{ffa1} studied various determinants of matrices involving Legendre symbol entries. Let $p$ be a prime and $d$ be an integer with $p\nmid d$. Sun defined
$$S(d,p):=\det\bigg[\(\frac{i^2+dj^2}{p}\)\bigg]_{1\le i,j\le\frac{p-1}{2}}.$$
In the same paper, Sun also studied some properties of the above determinant. For example, he showed that
$-S(d,p)$ is always a quadratic residue modulo $p$ if $(\frac{d}{p})=1$ and that $S(d,p)=0$ if $(\frac{d}{p})=-1$.
Moreover, Sun posed the following conjecture:
\begin{conjecture}\label{Conjecture A}{\rm(Sun)}
Let $p\equiv3\pmod4$ be a prime. Then $-S(1,p)$ is an integral square.
\end{conjecture}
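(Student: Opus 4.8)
The plan is to reinterpret $S(1,p)$ as the determinant of a \emph{group matrix} over the group of quadratic residues, whose eigenvalues are explicit character sums, and then to read the square off from two symmetries of those eigenvalues. First I would reduce to a group matrix. Since $i\mapsto i^2$ is a bijection from $\{1,\dots,(p-1)/2\}$ onto the set $R$ of nonzero squares modulo $p$, permuting rows and columns by this bijection gives $S(1,p)=\det[\left(\tfrac{r+s}{p}\right)]_{r,s\in R}$. For $r,s\in R$ one has $\left(\tfrac{r+s}{p}\right)=\left(\tfrac{r}{p}\right)\left(\tfrac{1+sr^{-1}}{p}\right)=\left(\tfrac{1+sr^{-1}}{p}\right)$, and since $p\equiv3\pmod4$ the element $-1$ is a non-residue, so $1+sr^{-1}\ne0$ and every entry is $\pm1$. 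Hence, with $g(h):=\left(\tfrac{1+h}{p}\right)$ for $h\in R$, the matrix equals the group matrix $[\,g(r^{-1}s)\,]_{r,s\in R}$ of the cyclic group $R$ of odd order $(p-1)/2$, so that
$$S(1,p)=\prod_{\psi}z_\psi,\qquad z_\psi:=\sum_{h\in R}g(h)\psi(h),$$
the product ranging over all characters $\psi$ of $R$.

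Two facts then do the real work. The first is the value of the trivial term: $z_{\mathbf 1}=\sum_{h\in R}\left(\tfrac{1+h}{p}\right)=\sum_{x=1}^{(p-1)/2}\left(\tfrac{x^2+1}{p}\right)$, and combining the standard evaluation $\sum_{x\in\F_p}\left(\tfrac{x^2+1}{p}\right)=-1$ with the fact that squaring is $2$-to-$1$ away from $0$ gives $z_{\mathbf 1}=-1$. The second is a symmetry of $g$: since $h\in R\Rightarrow h^{-1}\in R$, we have $g(h^{-1})=\left(\tfrac{1+h^{-1}}{p}\right)=\left(\tfrac{h^{-1}}{p}\right)\left(\tfrac{1+h}{p}\right)=g(h)$; substituting $h\mapsto h^{-1}$ in the defining sum then yields $z_\psi=z_{\overline\psi}$ for every character $\psi$.

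Now I would conclude. Because $|R|=(p-1)/2$ is odd, the trivial character is the only self-inverse one, so the $(p-3)/2$ nontrivial characters fall into $(p-3)/4$ inverse-pairs $\{\psi,\overline\psi\}$, and on each such pair $z_\psi=z_{\overline\psi}$. Therefore $\prod_{\psi\ne\mathbf 1}z_\psi=\left(\prod_{\{\psi,\overline\psi\}}z_\psi\right)^2=:w^2$ and $S(1,p)=z_{\mathbf 1}w^2=-w^2$. It remains to see that $w\in\Z$. Each $z_\psi$ is a $\Z$-linear combination of roots of unity, hence an algebraic integer lying in $\Q(\zeta_{(p-1)/2})$, and for $\sigma_a\in\Gal(\Q(\zeta_{(p-1)/2})/\Q)$ one has $\sigma_a(z_\psi)=z_{\psi^a}$; since $\psi\mapsto\psi^a$ permutes the inverse-pairs of nontrivial characters and, by $z_\psi=z_{\overline\psi}$, the value $z_\psi$ depends only on its pair, $\sigma_a$ fixes $w$. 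Thus $w$ is a rational algebraic integer, so $w\in\Z$ and $-S(1,p)=w^2$ is an integral square. (In particular $S(1,p)\le0$, refining Sun's result in this case.)

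I expect the symmetry $g(h)=g(h^{-1})$ — equivalently $z_\psi=z_{\overline\psi}$ — to be the crux: it is what turns a bare product of eigenvalues into a product of \emph{squares} of eigenvalues, and without it there is no reason for $-S(1,p)$ to be a perfect square (and indeed for $p\equiv1\pmod4$ it typically is not, which is precisely the case where $|R|$ becomes even and a nontrivial self-inverse character can occur). The evaluation $z_{\mathbf 1}=-1$ is what fixes the sign, so that $-S(1,p)$ rather than $S(1,p)$ is the square, and the passage from $w$ being an algebraic integer to $w\in\Z$ is then the short Galois-descent step above; the hypothesis $p\equiv3\pmod4$ enters only through $-1\notin R$ and through $|R|$ being odd.
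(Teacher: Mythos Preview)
Your argument is correct. Note, however, that the paper does \emph{not} itself prove Conjecture~1.1: it merely records that the case $p\equiv3\pmod4$ was settled by Alekseyev and Krachun, and then proves the companion case $p\equiv1\pmod4$ (Theorem~1.1). So there is no ``paper's own proof'' of this particular statement to compare against.

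That said, your method is essentially the same eigenvalue/character-sum strategy the paper uses for Theorem~1.1, specialized to $p\equiv3\pmod4$. Both arguments diagonalize $M_p$ by the characters of the cyclic group $R$ of quadratic residues, identify the trivial-character eigenvalue as $-1$, pair the remaining eigenvalues into conjugate pairs, and finish with a Galois-descent step to show the paired product lies in~$\Z$. The one substantive difference is how the pairing $z_\psi=z_{\overline\psi}$ is obtained: the paper (in its $p\equiv1\pmod4$ proof) appeals to $M_p$ being real symmetric, hence having real eigenvalues, so $\lambda_k\overline{\lambda_k}=\lambda_k^2$; you instead observe the functional symmetry $g(h)=g(h^{-1})$ directly. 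These are equivalent (for real-valued $g$ one has $z_{\overline\psi}=\overline{z_\psi}$, so $z_\psi=z_{\overline\psi}$ iff $z_\psi\in\R$), and your formulation makes the combinatorial source of the squaring a bit more transparent. The structural reason the $p\equiv3\pmod4$ case is cleaner is exactly the one you isolate: $|R|=(p-1)/2$ is odd, so there is no nontrivial self-inverse character to produce an extra unpaired factor (in the paper's case that extra factor is $\lambda_{n/2}=-a$, which is why $a$ appears in Theorem~1.1).
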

This conjecture was later confirmed by Alekseyev and Krachun by using some algebraic number theory. In the case $p\equiv1\pmod4$, Cohen, Sun and Vsemirnov also posed the following conjecture.
\begin{conjecture}\label{Conjecture B}{\rm(Cohen, Sun and Vsemirnov)}
Let $p=a^2+4b^2$ be a prime with $a,b$ integers and $a\equiv1\pmod4$. Then
$S(1,p)/a$ is an integral square.
\end{conjecture}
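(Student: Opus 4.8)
The plan is to diagonalise the determinant after reindexing. Since $\left(\frac{i^2+j^2}{p}\right)$ depends only on $i^2,j^2$ modulo $p$, and $i\mapsto i^2$ maps $\{1,\dots,\frac{p-1}{2}\}$ bijectively onto the set $R$ of nonzero quadratic residues mod $p$, one has $S(1,p)=\det\left[\left(\frac{r+s}{p}\right)\right]_{r,s\in R}$. Fix a generator $g$ of the cyclic group $R$; since $g^b$ is a square, $\left(\frac{g^a+g^b}{p}\right)=\left(\frac{g^{a-b}+1}{p}\right)$, so this is a circulant matrix of order $n:=\frac{p-1}{2}$ with symbol $f(m):=\left(\frac{g^m+1}{p}\right)$. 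Hence
\[
S(1,p)=\prod_{\psi}\widehat f(\psi),\qquad \widehat f(\psi):=\sum_{r\in R}\left(\tfrac{r+1}{p}\right)\psi(r),
\]
the product running over all characters $\psi$ of $R$, and I would split it according to whether $\mathrm{ord}(\psi)$ equals $1$, $2$, or is $>2$ (the last case supplying conjugate pairs $\{\psi,\overline\psi\}$).

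Write $\chi=\left(\frac{\cdot}{p}\right)$. A direct computation gives $\widehat f(1)=\sum_{r\in R}\left(\frac{r+1}{p}\right)=-1$. For nontrivial $\psi$, extend $\psi$ to a multiplicative character $\eta$ of $\F_p^{\times}$; substituting $r=-x$ and using $\tfrac12(1+\chi(x))=\mathbf{1}_{x\in R}$ rewrites $\widehat f(\psi)$ as $\psi(-1)\cdot\tfrac12\big(J(\eta,\chi)+J(\eta\chi,\chi)\big)$, where $J$ is the Jacobi sum; crucially $J(\eta,\chi)J(\eta\chi,\chi)=g(\chi)^2=p$. For the order-$2$ character $\psi_0$ take $\eta=\chi_4$, a quartic character; a short Gauss-sum manipulation (using $g(\chi_4)^4=p\,J(\chi_4,\chi_4)^2$) collapses the displayed expression to $\widehat f(\psi_0)=\mathrm{Re}\,J(\chi_4,\chi_4)$. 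Now the classical congruence $\mathrm{Re}\,J(\chi_4,\chi_4)\equiv-1\pmod 4$, combined with $(\mathrm{Re}\,J(\chi_4,\chi_4))^2+(\mathrm{Im}\,J(\chi_4,\chi_4))^2=p=a^2+4b^2$ (the odd part of this two-square decomposition being $(\mathrm{Re}\,J(\chi_4,\chi_4))^2=a^2$) and $a\equiv1\pmod4$, forces $\widehat f(\psi_0)=-a$. Thus $S(1,p)=\widehat f(1)\,\widehat f(\psi_0)\prod_{\mathrm{ord}(\psi)>2}\widehat f(\psi)=a\cdot\prod_{\mathrm{ord}(\psi)>2}\widehat f(\psi)$.

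The heart of the proof is the identity, for $\psi$ of order $>2$,
\[
\widehat f(\psi)\,\widehat f(\overline\psi)=\big(\mathrm{Re}\,J(\eta,\overline\eta\chi)\big)^2,
\]
whose right side is independent of the chosen extension $\eta$. To prove it: $\widehat f(\psi)\widehat f(\overline\psi)=|\widehat f(\psi)|^2=\tfrac14\big|J(\eta,\chi)+J(\eta\chi,\chi)\big|^2=\tfrac p2+\tfrac12\mathrm{Re}\big(J(\eta,\chi)\,\overline{J(\eta\chi,\chi)}\big)$ since $|J(\eta,\chi)|^2=|J(\eta\chi,\chi)|^2=p$; passing to Gauss sums one checks $J(\eta,\chi)\,\overline{J(\eta\chi,\chi)}=J(\eta,\overline\eta\chi)^2$; and $\mathrm{Re}(z^2)=2(\mathrm{Re}\,z)^2-|z|^2$ with $|J(\eta,\overline\eta\chi)|^2=p$ finishes it. Consequently $S(1,p)/a=R^2$, where $R:=\prod\mathrm{Re}\,J(\eta_\psi,\overline\eta_\psi\chi)$, the product being over the conjugate pairs $\{\psi,\overline\psi\}$ of order $>2$. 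Finally I would check $R\in\Z$: each $\widehat f(\psi)$ is an eigenvalue of the integral matrix, hence an algebraic integer, so each factor $\mathrm{Re}\,J(\eta_\psi,\overline\eta_\psi\chi)=\sqrt{\widehat f(\psi)\widehat f(\overline\psi)}$ (up to sign) is an algebraic integer; and $R$ is Galois-invariant, because for $\sigma_c\in\mathrm{Gal}(\Q(\zeta_{p-1})/\Q)$ one has $\sigma_c\big(J(\eta,\overline\eta\chi)\big)=J(\eta^c,\overline\eta^{c}\chi)$ (using $\chi^c=\chi$ as $c$ is odd), so $\sigma_c$ merely permutes the conjugate pairs of order $>2$ and hence the factors of $R$. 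Therefore $R\in\Q$; being also an algebraic integer, $R\in\Z$, and $S(1,p)/a=R^2$ is an integral square.

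I expect the two real difficulties to be: (i) pinning down the sign in $\widehat f(\psi_0)=-a$, which rests on the precise classical normalisation of the quartic Jacobi sum $J(\chi_4,\chi_4)$; and (ii) proving the identity $\widehat f(\psi)\widehat f(\overline\psi)=(\mathrm{Re}\,J(\eta,\overline\eta\chi))^2$ together with the integrality of $R$ — the Galois argument is short but needs care, since passing through Gauss sums temporarily introduces the parameter $\zeta_p$ alongside the $\zeta_{p-1}$ coming from the character values.
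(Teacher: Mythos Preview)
Your proposal is correct and follows essentially the same architecture as the paper: recognise $S(1,p)$ as (a reindexing of) a circulant over the group of nonzero quadratic residues, so that its eigenvalues are the character sums $\widehat f(\psi)=\sum_{r\in R}\chi(r+1)\psi(r)$; evaluate the trivial eigenvalue as $-1$ and the order-$2$ eigenvalue as $-a$ (the paper quotes \cite[Theorem~6.2.9]{BEW} for this, which packages exactly the quartic Jacobi-sum normalisation you invoke); pair the remaining eigenvalues into conjugate pairs; and then run a Galois-invariance argument over $\Q(\zeta_{p-1})$ to show the resulting square root is a rational, hence integral, number.

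The one place where your route is noticeably more elaborate than the paper's is the treatment of the conjugate pairs. You establish the identity $\widehat f(\psi)\widehat f(\overline\psi)=\big(\mathrm{Re}\,J(\eta,\overline\eta\chi)\big)^2$ via Gauss--Jacobi manipulations, and then argue Galois-invariance of the product of these real parts. The paper instead observes that $M_p$ is a \emph{real symmetric} matrix, so every eigenvalue $\lambda_k$ is already real; hence $\widehat f(\overline\psi)=\overline{\widehat f(\psi)}=\widehat f(\psi)$ and the paired product is simply $\big(\prod_{k=1}^{(p-5)/4}\lambda_k\big)^2$ with no Jacobi-sum identity needed. The Galois step is then applied directly to $\prod_{k}\lambda_k$. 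Your approach buys an explicit arithmetic description of the square root as a product of real parts of Jacobi sums, at the cost of the extra computation; the paper's shortcut is cleaner but less informative about what the square root actually is.
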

For example, if $p=5=1^2+4\times1^2$, then $S(1,5)=1=1\times1^2$. If $p=13=(-3)^2+4\times1^2$, then $S(1,13)=-27=-3\times3^2$.

As the first result of this paper, by considering some character sums over finite fields,
we confirm this conjecture and obtain the following result. For convenience, for each $d\in\Z$ we set
$$\ve(d)=\begin{cases}-1&\mbox{if}\ (\frac{d}{p})=1\ \text{and $d$ is not a biquadratic residue modulo $p$},\\1&\mbox{otherwise}\ .\end{cases} $$
\begin{theorem}\label{theorem A}
Let $p=a^2+4b^2$ be a prime with $a,b$ integers and $a\equiv1\pmod4$ and let $d$ be an integer. Then
$\ve(d)S(d,p)/a$ is an integral square. In particular, when $d=1$ the number $S(1,p)/a$ is an integral square.
\end{theorem}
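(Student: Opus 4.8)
The plan is to diagonalize the matrix by recognizing it as a circulant, write $S(d,p)$ as a product of character sums, and then isolate the few factors that are not manifestly paired with a complex conjugate.

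First I would reduce to the case $\left(\frac dp\right)=1$: if $\left(\frac dp\right)=-1$ then $S(d,p)=0$ (a result of Sun quoted above) and $\varepsilon(d)=1$, so there is nothing to prove, and if $p\mid d$ the matrix has equal rows. Assume now $\left(\frac dp\right)=1$, and put $n=\frac{p-1}2$, $\lambda=\left(\frac\cdot p\right)$, and $QR=(\mathbb F_p^\times)^2$. Since $\left(\frac{i^2+dj^2}p\right)$ depends only on $i^2,j^2\bmod p$ and $\left(\frac{g^{2u}+dg^{2v}}p\right)=\left(\frac{g^{2(u-v)}+d}p\right)$ for a primitive root $g$, permuting the rows and columns of the matrix by one and the same bijection identifies $S(d,p)$ with the circulant determinant $\det\big[f(u-v)\big]_{u,v\in\mathbb Z/n}$, where $f(k)=\left(\frac{g^{2k}+d}p\right)$. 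Writing $\psi_t$ for the character of $\mathbb F_p^\times$ with $\psi_t(g)=e^{2\pi it/(p-1)}$, diagonalizing the circulant gives
\[
S(d,p)=\prod_{t=0}^{n-1}L_t,\qquad L_t:=\sum_{r\in QR}\lambda(r+d)\,\psi_t(r),
\]
and a direct computation yields $L_0=-\frac{1+\lambda(d)}2=-1$.

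Next I would use two symmetries of the $L_t$. Because $\psi_{n-t}$ and $\overline{\psi_t}$ agree on $QR$ (they differ by $\lambda$, which is trivial there), one has $L_{n-t}=\overline{L_t}$; hence for $1\le t\le n-1$ all factors pair off under $t\leftrightarrow n-t$ except the self-conjugate index $t=\frac n2=\frac{p-1}4$, for which $\psi_{n/2}|_{QR}$ takes values in $\{\pm1\}$, so $L_{n/2}\in\mathbb Z$. Setting $T:=\prod_{t=1}^{n/2-1}L_t$ we get
\[
S(d,p)=L_0\,L_{n/2}\prod_{\substack{1\le t\le n-1\\ t\ne n/2}}L_t=-\,L_{n/2}\,T\overline T=-\,L_{n/2}\,|T|^2 .
\]
To evaluate $L_{n/2}$, substitute $r=du$ and complete the sum over $QR$ to one over $\mathbb F_p^\times$: with $\chi:=\psi_{(p-1)/4}$ a quartic character this expresses $L_{n/2}$ through $J(\chi,\chi)$, and the standard Gauss–Jacobi sum identities — in particular $J(\chi,\lambda)=\chi(-1)J(\chi,\chi)$ and, for $p\equiv1\pmod4$, $\chi(-1)\left(\frac2p\right)=1$ — give $L_{n/2}=\chi(d)\,\mathrm{Re}\,J(\chi,\chi)$. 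Here $\chi(d)=\varepsilon(d)$, since $d\in QR$ and $\chi(d)=-1$ exactly when $d$ is not a biquadratic residue. Finally, by the classical evaluation of quartic Jacobi sums, $J(\chi,\chi)$ is a Gaussian integer of norm $p$ congruent to $-1$ modulo $2(1+\sqrt{-1})$; combined with $p=a^2+4b^2$ and $a\equiv1\pmod4$ this forces $\mathrm{Re}\,J(\chi,\chi)=-a$. Therefore $L_{n/2}=-\varepsilon(d)a$ and
\[
\varepsilon(d)\,S(d,p)/a=|T|^2\in\mathbb Z_{\ge0}.
\]

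It remains to prove that $|T|^2$ is a perfect square, which is the heart of the matter. Repeating the Jacobi-sum computation for general $t$ gives $2L_t=\psi_t(-4d)\big[J(\psi_t,\psi_t)+J(\psi_t\lambda,\psi_t\lambda)\big]$, and the Hasse–Davenport product relation $g(\psi)g(\psi\lambda)=\overline{\psi(4)}\,g(\psi^2)g(\lambda)$ yields the identity $J(\psi_t,\psi_t)\,J(\psi_t\lambda,\psi_t\lambda)=\overline{\psi_t(16)}\,p$; substituting it gives $|L_t|^2=\frac1{4p}\big|J(\psi_t,\psi_t)^2+\overline{\psi_t(16)}\,p\big|^2$, hence a closed ``norm form'' for $|T|^2$. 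I would then organize $T=\prod_{t=1}^{n/2-1}L_t$ along the action of $(\mathbb Z/n)^\times$: the $L_t$ with $\psi_t|_{QR}$ of a fixed order $m$ form one Galois orbit, their product equals $N_{\mathbb Q(\zeta_m)/\mathbb Q}(L_{t_m})$, and (using $-4d\in QR$) the unit factors $\psi_t(-4d)$ drop out of each such norm. The main obstacle is to show that each of these norms is a perfect square; I expect this to come down to a careful bookkeeping of how the orders $m$ and $2m$ interact, together with the splitting of $p$ and the action of complex conjugation in $\mathbb Q(\zeta_m)$ — the underlying reason being that the Jacobi sums involved have norm a power of $p$, which forces the requisite parity of exponents. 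The few degenerate small cases (essentially $p=5$, where $T$ is an empty product and $|T|^2=1$) are checked by hand.
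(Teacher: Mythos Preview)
Your diagonalization via the circulant structure, the evaluations $L_0=-1$ and $L_{n/2}=-\varepsilon(d)a$, and the conjugate pairing $L_{n-t}=\overline{L_t}$ are all correct and match the paper's argument. The genuine gap is the last paragraph: having reduced to showing that $|T|^2$ is a perfect square, you propose a route through Hasse--Davenport, Galois-orbit norms, and ``parity of exponents'' that is left vague and would be hard to complete as stated. The paper closes this gap with a much simpler observation you are missing: for $d=1$ the matrix $\big[\big(\tfrac{i^2+j^2}{p}\big)\big]$ is real \emph{symmetric}, hence every eigenvalue $L_t$ is real. Then $\overline{L_t}=L_t$, so your pairing already gives $|T|^2=T^2$ with $T\in\mathbb{R}$, and it only remains to show $T\in\mathbb{Z}$. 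This is a direct Galois-invariance check (no ``each norm is a square'' needed): each $\sigma_l\in\mathrm{Gal}(\mathbb{Q}(\zeta_{p-1})/\mathbb{Q})$ sends $L_t$ to some $L_{t'}$ with $2t'\equiv\pm 2lt\pmod{p-1}$, the sign being irrelevant since $L_{t'}=\overline{L_{t'}}$; one verifies that $t\mapsto t'$ permutes $\{1,\dots,\tfrac{n}{2}-1\}$, so $\sigma_l(T)=T$, whence $T\in\mathbb{Q}$, and $T\in\mathbb{Z}$ since it is an algebraic integer.

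For general $d$ with $\big(\tfrac{d}{p}\big)=1$ the paper does not diagonalize $S(d,p)$ directly (that matrix is not symmetric, so reality of the eigenvalues is unavailable). Instead it observes that the column permutation $j^2\mapsto dj^2$ carries $S(1,p)$ to $S(d,p)$, with sign $d^{(p-1)/4}\bmod p=\varepsilon(d)$; hence $S(d,p)=\varepsilon(d)S(1,p)$ and the general case reduces immediately to $d=1$. Your substitution $r=du$ in the $L_{n/2}$-computation is exactly this permutation at the level of a single eigenvalue; applying it to every $L_t$ would have given $L_t=\psi_t(d)\,L_t^{(1)}$ with $L_t^{(1)}\in\mathbb{R}$, and then $|T|^2=\big(\prod_{t=1}^{n/2-1} L_t^{(1)}\big)^2$ directly, bypassing the Jacobi-sum machinery entirely.
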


Sun \cite{ffa1} also made the following conjecture.
\begin{conjecture}\label{Conjecture C}{\rm(Sun)}
Let $S^*(1,p)$ denote the determinant obtained from $S(1,p)$ via replacing the entries $(\frac{1^2+j^2}{p})$ $(j=1,\cdots,\frac{p-1}{2})$ in the first row by $(\frac{j}{p})$ $(j=1,\cdots,\frac{p-1}{2})$ respectively. Then $-S^*(1,p)$ is an integral square if $p\equiv1\pmod4$.
\end{conjecture}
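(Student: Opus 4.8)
The plan is to derive this from Theorem~\ref{theorem A}. Fix $p\equiv1\pmod4$; let $Q$ be the cyclic group of nonzero quadratic residues modulo $p$, put $m=|Q|=\f{p-1}2$ (which is even), and let $\rho$ be the unique character of $Q$ of order $2$ (so $\rho(t)=1$ exactly when $t$ is a fourth power modulo $p$). First I would reindex: since $i\mapsto i^2$ is a bijection from $\{1,\dots,m\}$ onto $Q$, permuting rows and columns by squares turns $S(1,p)$ and $S^*(1,p)$ into $Q\times Q$ determinants. Writing $f(x):=\l(\f{1+x}p\r)$ for $x\in Q$ and $M:=[f(r^{-1}s)]_{r,s\in Q}$, one has $S(1,p)=\det M$, whereas $S^*(1,p)=\det M^*$, where $M^*$ agrees with $M$ except in the row indexed by $1\in Q$, whose $(1,s)$-entry is now $\l(\f{\sqrt s}p\r)=\rho(s)$ — here $p\equiv1\pmod4$ ensures that $\l(\f jp\r)$ depends only on $j^2$ and equals $\rho(j^2)$. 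The group matrix $M$ has the spectral description $Mv_\psi=T(\psi)\,v_\psi$ with $v_\psi=(\psi(s))_{s\in Q}$ and $T(\psi):=\sum_{x\in Q}f(x)\psi(x)$, so that $S(1,p)=\prod_{\psi\in\widehat Q}T(\psi)$.

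The crux is the identity $S^*(1,p)\,T(\rho)=S(1,p)$. Let $\ga_s$ be the cofactor of the $(1,s)$-entry of $M$. Because row $1$ is the only row where $M$ and $M^*$ differ, these are also the relevant cofactors of $M^*$, so expansion along row $1$ gives $S(1,p)=\sum_s f(s)\ga_s$ and $S^*(1,p)=\sum_s\rho(s)\ga_s$. The cofactor relations $\sum_s f(r^{-1}s)\ga_s=\delta_{r,1}\det M$ $(r\in Q)$, after a finite Fourier transform on $Q$ in the variable $r$, collapse to $\bg(\sum_s\ga_s\ov{\psi(s)}\bg)T(\psi)=S(1,p)$ for every $\psi\in\widehat Q$; taking $\psi=\rho$ (which is real) yields $S^*(1,p)\,T(\rho)=S(1,p)$. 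This manipulation only uses cofactor expansions, so it is valid even if $M$ is singular.

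Next I would evaluate $T(\rho)$. Realising $\rho$ as the restriction to $Q$ of a quartic character $\chi_4$ of $\F_p^{\times}$ and using $\sum_{x\in Q}=\f12\sum_{x\in\F_p^{\times}}\bg(1+\l(\f xp\r)\bg)$, the standard Gauss–Jacobi sum identities (in particular $J(\chi_4,\chi_4^2)=\chi_4(-1)J(\chi_4,\chi_4)$) reduce $T(\rho)$ to $\operatorname{Re}J(\chi_4,\chi_4)$. By the classical evaluation of quartic Jacobi sums, $\operatorname{Re}J(\chi_4,\chi_4)$ is the odd integer $A$ with $A\equiv-1\pmod4$ and $A^2+B^2=p$ for some even $B$; since $p=a^2+4b^2$ with $a\equiv1\pmod4$, uniqueness of this representation gives $A=-a$. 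Hence $T(\rho)=-a\ne0$, so $S^*(1,p)=S(1,p)/T(\rho)=-\,S(1,p)/a$, and therefore $-S^*(1,p)=S(1,p)/a$, which is an integral square by Theorem~\ref{theorem A}. (Alternatively, one need not pin down $T(\rho)$ exactly: the realness of every $T(\psi)$ — the pivotal fact behind Theorem~\ref{theorem A} — forces each Galois orbit of characters of order $\ge3$ to contribute a perfect square to $\prod_{\psi\in\widehat Q}T(\psi)$, and $T(\psi_0)=-1$, so $S(1,p)=-T(\rho)\cdot(\text{square})$ and thus $S^*(1,p)=-(\text{square})$, once one knows $T(\rho)\ne0$, e.g. from $|J(\chi_4,\chi_4)|^2=p$ with $J(\chi_4,\chi_4)\in\Z[i]$ and $p$ not a perfect square.)

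I expect the main obstacle to be the evaluation of $T(\rho)$, i.e.\ the sign-sensitive fact $T(\rho)=-a$, which rests on the classical congruence $\operatorname{Re}J(\chi_4,\chi_4)\equiv-1\pmod4$ together with $|J(\chi_4,\chi_4)|^2=p$; everything else is bookkeeping with group matrices, character orthogonality, and the already-established Theorem~\ref{theorem A}.
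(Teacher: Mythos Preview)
Your argument is correct and reaches the same endpoint as the paper, namely the identity $S^*(1,p)=-S(1,p)/a$, after which Theorem~\ref{theorem A} finishes. The route, however, is different. You package $M_p$ as a group matrix over $Q$, then extract the relation $S^*(1,p)\,T(\rho)=S(1,p)$ from the cofactor identities via a Fourier transform on $Q$, and finally evaluate $T(\rho)=-a$ through Jacobi sums. The paper instead observes directly that the eigenvector relation $\sum_i\l(\f{i^2+j^2}{p}\r)\l(\f ip\r)=-a\l(\f jp\r)$ (the same quartic Jacobi-sum input, cited from \cite[Theorem 6.2.9]{BEW}) lets one pass from the matrix with first row $-a\l(\f jp\r)$ to $M_p$ by elementary row operations, giving $-a\,S^*(1,p)=S(1,p)$ in one line. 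Your approach is more structural and makes transparent why the replaced row interacts with exactly one eigenvalue of the group matrix; the paper's approach is shorter and avoids the cofactor/Fourier bookkeeping. Both rest on the same arithmetic fact $\sum_{j=1}^n\l(\f{1+j^2}{p}\r)\l(\f jp\r)=-a$.
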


As an application of Theorem \ref{theorem A}, we confirm this conjecture.
\begin{corollary}\label{corollary A}
Let $p\equiv1\pmod4$ be a prime. Then $-S^*(1,p)$ is an integral square.
\end{corollary}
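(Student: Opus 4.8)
The plan is to express $S^*(1,p)$ as an explicit scalar multiple of $S(1,p)$ and then to invoke Theorem \ref{theorem A} with $d=1$. First I would reindex both determinants. Let $Q$ denote the set of nonzero quadratic residues modulo $p$. Since $i\mapsto i^2$ is a bijection from $\{1,\dots,\frac{p-1}2\}$ onto $Q$, and since permuting rows and columns of a square matrix by one and the same permutation leaves the determinant unchanged,
$$S(1,p)=\det\l[\l(\frac{r+s}{p}\r)\r]_{r,s\in Q},\qquad S^*(1,p)=\det M^*,$$
where $M^*$ is the same matrix with the row indexed by $r=1$ replaced by $(\phi(s))_{s\in Q}$, with $\phi(s):=\l(\frac{j(s)}{p}\r)$ and $j(s)\in\{1,\dots,\frac{p-1}2\}$ the representative satisfying $j(s)^2\eq s\pmod p$. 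Because $p\eq1\pmod4$, the two square roots of $s\in Q$ have the same Legendre symbol, so $\phi$ is a well-defined homomorphism $Q\to\{\pm1\}$; its kernel is the group of fourth powers, so $\phi$ is the unique quadratic character $\psi_0$ of the cyclic group $Q$. Setting $g(h):=\l(\frac{1+h}{p}\r)$ and using $\l(\frac{r+s}{p}\r)=\l(\frac{1+sr^{-1}}{p}\r)$ for $r\in Q$, the matrix above is the $Q$-circulant $[\,g(sr^{-1})\,]_{r,s\in Q}$, whose eigenvalues are $\widehat g(\chi):=\sum_{h\in Q}g(h)\chi(h)$ over the characters $\chi$ of $Q$.

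By the matrix determinant lemma, together with the facts that a $Q$-circulant is diagonalized by the characters of $Q$ (hence its inverse is again a $Q$-circulant) and that $\psi_0=\phi$ is itself a character, a short Fourier computation yields the identity
$$S^*(1,p)\cdot\widehat g(\psi_0)=S(1,p),$$
which holds in general (the case $S(1,p)=0$ follows by a routine limiting argument, since $\widehat g(\psi_0)\ne0$ by the evaluation below). Parametrizing $h\in Q$ as $h=j^2$ with $1\le j\le\frac{p-1}2$ and using $\psi_0(j^2)=\l(\frac jp\r)$, this distinguished eigenvalue becomes a concrete character sum:
$$\widehat g(\psi_0)=\sum_{j=1}^{(p-1)/2}\l(\frac{j(1+j^2)}{p}\r)=\sum_{j=1}^{(p-1)/2}\l(\frac{j^3+j}{p}\r)=\frac12\sum_{x\in\F_p}\l(\frac{x^3+x}{p}\r),$$
the last step using that $x\mapsto-x$ fixes the summand when $p\eq1\pmod4$.

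The main obstacle is the exact evaluation of $\widehat g(\psi_0)$ as an integer, and especially the determination of its sign, since knowing only $|\widehat g(\psi_0)|=|a|$ would give merely that $\pm S^*(1,p)$ is a square. Extending $\psi_0$ to a quartic character $\chi_4$ of $\F_p^{\times}$, and using that $h\in Q$ exactly when $\frac12\l(1+\l(\frac hp\r)\r)=1$ together with $\l(\frac{\cdot}{p}\r)\chi_4=\overline{\chi_4}$, one finds $\widehat g(\psi_0)=\Re\sum_{h}\l(\frac{1+h}{p}\r)\chi_4(h)$; the substitution $h\mapsto-h$ and the standard Gauss-sum identities $\tau(\alpha)\tau(\beta)=J(\alpha,\beta)\tau(\alpha\beta)$ and $\tau(\alpha)\tau(\overline\alpha)=\alpha(-1)p$ then collapse this to $\widehat g(\psi_0)=\Re J(\chi_4,\chi_4)$ (equivalently, $\sum_x\l(\frac{x^3+x}{p}\r)$ is the negative of the Frobenius trace of the CM curve $y^2=x^3+x$). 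By the classical evaluation of the quartic Jacobi sum, for $p=a^2+4b^2$ with $a\eq1\pmod4$ one has $\Re J(\chi_4,\chi_4)=-a$, hence $\widehat g(\psi_0)=-a$. Therefore $S^*(1,p)=-S(1,p)/a$, and since Theorem \ref{theorem A} gives (taking $d=1$, where $\ve(1)=1$) that $S(1,p)/a$ is an integral square, we conclude that $-S^*(1,p)=S(1,p)/a$ is an integral square. This proves Corollary \ref{corollary A}.
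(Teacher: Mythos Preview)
Your argument is correct and ultimately lands on the same identity as the paper, namely $S^*(1,p)\cdot(-a)=S(1,p)$, after which both of you invoke Theorem~\ref{theorem A} with $d=1$. The difference is in how this identity is reached. The paper observes (citing \cite[Theorem 6.2.9]{BEW}) that $\sum_{i=1}^{n}\l(\frac{i^2+j^2}{p}\r)\l(\frac{i}{p}\r)=-a\l(\frac{j}{p}\r)$, which says precisely that the linear combination $\sum_i\l(\frac{i}{p}\r)\cdot(\text{row }i\text{ of }M_p)$ equals $-a$ times the replacement row; a single elementary row operation then gives $S^*(1,p)=-S(1,p)/a$ with no invertibility hypothesis. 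You instead reindex by the group $Q$ of quadratic residues, recognize the matrix as a $Q$-circulant, identify the replacement row as the eigenvector for the quadratic character $\psi_0$ of $Q$, and use the matrix determinant lemma to obtain $S^*(1,p)=S(1,p)/\widehat g(\psi_0)$; you then evaluate $\widehat g(\psi_0)=-a$ via the quartic Jacobi sum (equivalently the point count on $y^2=x^3+x$). Your route is more structural---it explains \emph{why} the row operation works, since the replacement row is an eigenvector---and your derivation of $\widehat g(\psi_0)=-a$ is self-contained rather than a citation, but it is heavier: the matrix determinant lemma needs $M_p$ invertible, which forces the limiting (or polynomial-identity) argument you allude to, whereas the paper's row-reduction is a two-line computation valid unconditionally. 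Both approaches rest on exactly the same character-sum evaluation, which in the paper's language is the eigenvalue $\lambda_{n/2}=-a$ from (\ref{Eq. B in the proof of Theorem A}).
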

For example, $S^*(1,5)=-1^2$, $S^*(1,13)=-3^2$ and $S^*(1,17)=-21^2$.

The proofs of our main results will be given at Sections 2.
\maketitle
\section{Proofs of the main results}
\setcounter{lemma}{0}
\setcounter{theorem}{0}
\setcounter{corollary}{0}
\setcounter{remark}{0}
\setcounter{equation}{0}
\setcounter{conjecture}{0}

We begin with the following permutation involving quadratic residues (readers may refer to \cite{Hou,ffa2} for details on the recent progress on permutations over finite fields). Let $p\equiv1\pmod4$ be a prime and let $d\in\Z$ with $(\frac{d}{p})=1$. If we write $p=2n+1$, then clearly the sequence
$$d\cdot1^2\ {\rm mod}\ p, \cdots, d\cdot n^2\ {\rm mod}\ p$$
is a permutation $\pi_p(d)$ of the sequence
$$1^2\ {\rm mod}\ p, \cdots, n^2\ {\rm mod}\ p.$$
Let $\sgn(\pi_p(d))$ be the sign of $\pi_p(d)$. We first have the following result:
\begin{lemma}\label{Lemma permutation}
Let $p\equiv1\pmod4$ be a prime, and let $d\in\Z$ be a quadratic residue modulo $p$. Then
$$\sgn(\pi_p(d))=\begin{cases}1&\mbox{if}\ $d$\ \text{is a biquadratic residue modulo $p$},\\-1&\mbox{otherwise}\ .\end{cases}$$
\end{lemma}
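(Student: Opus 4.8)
The plan is to compute the sign of the permutation $\pi_p(d)$ by relating it to an explicit product formula for the sign of a permutation, and then to evaluate the resulting product of Legendre-type symbols using Gauss's lemma and the quartic residue character. Write $p=2n+1$ and let $g$ be a primitive root modulo $p$; then the nonzero quadratic residues are precisely $g^{2k}$ for $k=1,\dots,n$, and since $d$ is a quadratic residue we may write $d\equiv g^{2m}\pmod p$ for some integer $m$, where $d$ is a biquadratic residue exactly when $m$ is even. Under the identification of the quadratic residues $\{1^2,\dots,n^2\}$ with $\Z/n\Z$ via $g^{2k}\leftrightarrow k$, the permutation $\pi_p(d)$ becomes translation by $m$, i.e. $k\mapsto k+m \pmod n$. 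The sign of a translation by $m$ on a set of size $n$ is a standard computation: it equals $(-1)^{m(n - \gcd(m,n))}$, or more simply, translation by $1$ on $n$ elements is an $n$-cycle of sign $(-1)^{n-1}$, so translation by $m$ has sign $((-1)^{n-1})^{?}$—but this needs care, since translation by $m$ is a product of $\gcd(m,n)$ cycles each of length $n/\gcd(m,n)$.

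So the first key step is: $\sgn(\pi_p(d)) = (-1)^{\gcd(m,n)(n/\gcd(m,n)-1)} = (-1)^{n - \gcd(m,n)}$. Now I use $p\equiv 1\pmod 4$, which gives $n=(p-1)/2$ even, so $(-1)^n=1$ and hence $\sgn(\pi_p(d)) = (-1)^{-\gcd(m,n)} = (-1)^{\gcd(m,n)}$. The second key step is to show $(-1)^{\gcd(m,n)} = 1$ iff $m$ is even, i.e. iff $d$ is a biquadratic residue. Since $n$ is even, $\gcd(m,n)$ is odd precisely when $m$ is odd; thus $(-1)^{\gcd(m,n)} = (-1)^m$, and $(-1)^m = 1$ exactly when $m$ is even, which is exactly the condition that $d = g^{2m}$ is a fourth power modulo $p$. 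This gives the claimed formula.

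Let me reconsider whether the cycle-structure computation is really the cleanest route: alternatively one can use the classical product formula $\sgn(\sigma) = \prod_{i<j} \frac{\sigma(j)-\sigma(i)}{j-i}$ applied to the map $a_i := d\cdot i^2 \bmod p$ on indices $1\le i\le n$, but this reintroduces Legendre symbols and a Gauss-lemma-style sign count that is essentially equivalent to the cycle argument and is messier; I would only fall back on it if the "identification with $\Z/n\Z$" step needs more justification. The one genuinely delicate point, and the step I expect to be the main obstacle, is justifying cleanly that multiplication-by-$d$ on the quadratic residues is conjugate to translation-by-$m$ on $\Z/n\Z$ with the correct sign — one must check that the bijection $k\leftrightarrow g^{2k}$ from $\Z/n\Z$ to $QR_p$ is well-defined (this uses $\ord(g^2) = n$) and that sign is a conjugacy invariant, so the sign of $\pi_p(d)$ equals the sign of the translation regardless of how we order the underlying set. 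Once that is in place the two parity computations above are routine, and the hypothesis $p\equiv 1\pmod 4$ enters exactly once, to force $n$ even.
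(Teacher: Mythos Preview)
Your argument is correct, but the paper takes a different and rather shorter route---in fact, exactly the one you mention and set aside as ``messier.'' The paper simply applies the product formula for the sign directly to the values $i^2\bmod p$:
\[
\sgn(\pi_p(d))\equiv\prod_{1\le i<j\le n}\frac{dj^2-di^2}{j^2-i^2}=d^{\binom{n}{2}}=(d^{(p-1)/4})^{\,n-1}\equiv d^{(p-1)/4}\pmod p,
\]
using that $d^{(p-1)/4}=\pm1$ (since $d$ is a quadratic residue) and that $n-1$ is odd (since $p\equiv1\pmod4$). This is a two-line computation with no need for a primitive root, cycle decomposition, or the parity analysis of $\gcd(m,n)$. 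Your approach, by contrast, is more structural: it explains the result via the group isomorphism $\mathrm{QR}_p\cong\Z/n\Z$ and the cycle type of a translation, which makes transparent \emph{why} only the biquadratic character of $d$ matters. Both are fine; the paper's version is quicker, yours is more conceptual. The one place where you should tighten the write-up is the conjugacy step: note that $\pi_p(d)$ is literally a permutation of the index set $\{1,\dots,n\}$, so you are composing two bijections ($i\mapsto i^2\bmod p$ and $g^{2k}\mapsto k$) to reach translation on $\Z/n\Z$, and sign is a conjugacy invariant at each step.
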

\begin{proof}
It is clear that
$$\sgn(\pi_p(d))\equiv\prod_{1\le i<j\le n}\frac{dj^2-di^2}{j^2-i^2}\pmod p.$$
By this we obtain
$$\sgn(\pi_p(d))\equiv (d^{\frac{p-1}{4}})^{n-1}\equiv d^{\frac{p-1}{4}}\pmod p.$$
This implies the desired result.
\end{proof}

We also need the following known result concerning eigenvalues of a matrix.
\begin{lemma}\label{Lemma eigenvalues}
Let $M$ be an $m\times m$ complex matrix. Let $\mu_1,\cdots,\mu_m$ be complex numbers, and let ${\bf u}_1,\cdots, {\bf u}_m$ be $m$-dimensional column vectors. Suppose that $M{\bf u}_k=\mu_k{\bf u}_k$ for each $1\le k\le m$ and that ${\bf u}_1,\cdots, {\bf u}_m$ are linear independent. Then $\mu_1,\cdots,\mu_m$ are exactly all the eigenvalues of $M$ (counting multiplicity).
\end{lemma}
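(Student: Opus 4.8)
The plan is to repackage the $m$ scalar eigenvector equations into a single similarity relation. First I would assemble the given vectors into the $m\times m$ matrix $U=[\,{\bf u}_1\ \cdots\ {\bf u}_m\,]$ whose $k$-th column is ${\bf u}_k$. The hypothesis that ${\bf u}_1,\dots,{\bf u}_m$ are linearly independent is, for $m$ vectors in $\C^m$, exactly the assertion that $U$ is invertible; this is the one fact I want to record explicitly, since it is what licenses the next step.

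Next I would observe that the $m$ equations $M{\bf u}_k=\mu_k{\bf u}_k$ $(1\le k\le m)$ are equivalent to the single matrix identity $MU=UD$, where $D=\mathrm{diag}(\mu_1,\dots,\mu_m)$: indeed the $k$-th column of $MU$ is $M{\bf u}_k=\mu_k{\bf u}_k$, which is precisely the $k$-th column of $UD$. Since $U$ is invertible, this yields $U^{-1}MU=D$, so $M$ is similar to the diagonal matrix $D$.

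Finally I would use the fact that similar matrices have the same characteristic polynomial:
$$\det(xI_m-M)=\det\!\big(U^{-1}(xI_m-M)U\big)=\det(xI_m-D)=\prod_{k=1}^m(x-\mu_k).$$
Reading off the roots with multiplicity, the eigenvalues of $M$ (counted with algebraic multiplicity) are exactly $\mu_1,\dots,\mu_m$, which is the claim; in particular the "counting multiplicity" clause is obtained for free, since it comes from an equality of polynomials rather than merely of root sets. I do not expect any genuine obstacle here — the argument is entirely routine — the only point requiring a word of justification is the passage from linear independence of the ${\bf u}_k$ to invertibility of $U$, and hence to the similarity $U^{-1}MU=D$.
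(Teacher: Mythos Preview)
Your argument is correct and entirely standard: assembling the eigenvectors into an invertible $U$, rewriting the hypotheses as $MU=UD$, and concluding via similarity that $\det(xI_m-M)=\prod_{k=1}^m(x-\mu_k)$ is exactly how one proves this. The paper itself does not give a proof of this lemma at all; it is introduced as a ``known result concerning eigenvalues of a matrix'' and simply stated, so there is nothing to compare against and your write-up would serve perfectly well as the missing justification.
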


Before the proof of Theorem \ref{theorem A}, we first introduce some notations. In the remaining part of this section, we let $p=a^2+4b^2$ be a prime with $a,b\in\Z$ and $a\equiv1\pmod4$, and let $n=\frac{p-1}{2}$. In addition, we let $\chi(\Z/p\Z)$ denote the group of all multiplicative characters on the finite field $\Z/p\Z=\F_p$, and let $\chi_p$ be a generator of $\chi(\Z/p\Z)$, i.e.,
$$\chi(\Z/p\Z)=\{\chi_p^k: k=1,2,\cdots,p-1\}.$$
Readers may refer to \cite[Chapter 8]{classical} for a detailed introduction on characters on finite fields. Also, given any matrix $M$, the symbol $M^T$ denotes the transpose of $M$.

Now we are in a position to prove our first theorem.

{\bf Proof of Theorem \ref{theorem A}.} Throughout this proof, we define
$$M_p:=\bigg[\(\frac{i^2+j^2}{p}\)\bigg]_{1\le i,j\le n}.$$
We first determine all the eigenvalues of $M_p$. For $k=1,2,\cdots,n$, we let
\begin{equation}\label{Eq. eigenvalues lambda k}
\lambda_k:=\sum_{1\le j\le n}\(\frac{1+j^2}{p}\)\chi_p^k(j^2).
\end{equation}
We claim that $\lambda_1,\cdots,\lambda_n$ are exactly all the eigenvalues of $M_p$ (counting multiplicity). In fact, for any $1\le i,k\le n$ we have
\begin{align*}
\sum_{1\le j\le n}\(\frac{i^2+j^2}{p}\)\chi_p^k(j^2)&=\sum_{1\le j\le n}\(\frac{1+j^2/i^2}{p}\)\chi_p^k(j^2/i^2)\chi^k_p(i^2)\\
&=\sum_{1\le j\le n}\(\frac{1+j^2}{p}\)\chi_p^k(j^2)\chi_p^k(i^2)=\lambda_k\chi_p^k(i^2).
\end{align*}
This implies that for each $k=1,\cdots,n$, we have
$$M_p{\bf v}_k=\lambda_k{\bf v}_k,$$
where
$${\bf v}_k:=(\chi_p^k(1^2),\chi_p^k(2^2),\cdots,\chi_p^k(n^2))^T.$$
Since
$$\left| \begin{array}{cccccccc}
\chi_p^1(1^2) & \chi_p^2(1^2) & \ldots  & \chi_p^n(1^2)  \\
\chi_p^1(2^2) & \chi_p^2(2^2) &  \ldots  &  \chi_p^n(2^2)\\
\vdots & \vdots & \ddots  & \vdots    \\
\chi_p^1(n^2)& \chi_p^2(n^2) & \ldots &  \chi_p^n(n^2)\\
\end{array} \right|=\pm\prod_{1\le i<j\le n}\(\chi_p(j^2)-\chi_p(i^2)\)\ne0,$$
the vectors ${\bf v}_1,\cdots,{\bf v}_n$ are linear independent. By Lemma \ref{Lemma eigenvalues} our claim holds. Hence we have
\begin{equation}\label{Eq. S(1,p)}
S(1,p)=\det M_p=\prod_{1\le k\le n}\lambda_k=\prod_{1\le k\le n}\(\sum_{1\le j\le n}\(\frac{1+j^2}{p}\)\chi_p^k(j^2)\).
\end{equation}
Now we turn to the last product. When $k=n$, by \cite[Exercise 8, p. 63]{classical} we have
\begin{equation}\label{Eq. A in the proof of Theorem A}
\lambda_n=\sum_{1\le j\le n}\(\frac{1+j^2}{p}\)\chi_p^n(j^2)=\sum_{1\le j\le n}\(\frac{1+j^2}{p}\)=-1.
\end{equation}
When $k=n/2$, by \cite[Theorem 6.2.9]{BEW} we have
\begin{equation}\label{Eq. B in the proof of Theorem A}
\lambda_{n/2}=\sum_{1\le j\le n}\(\frac{1+j^2}{p}\)\chi_p^{n/2}(j^2)=\sum_{1\le j\le n}\(\frac{1+j^2}{p}\)\(\frac{j}{p}\)=-a.
\end{equation}
In addition, the product
$$\prod_{1\le k\le n,k\ne n,n/2}\(\sum_{1\le j\le n}\(\frac{1+j^2}{p}\)\chi_p^k(j^2)\)$$
is equal to
\begin{align*}
\prod_{k=1}^{\frac{p-5}{4}}\(\sum_{1\le j\le n}\(\frac{1+j^2}{p}\)\chi_p^k(j^2)\)\(\sum_{1\le j\le n}\(\frac{1+j^2}{p}\)\chi_p^{-k}(j^2)\)=\prod_{k=1}^{\frac{p-5}{4}}\lambda_k\overline{\lambda_k},
\end{align*}
where $\overline{\lambda_k}$ is the complex conjugation of $\lambda_k$.
As $M_p$ is a real symmetric matrix, every eigenvalue $\lambda_k$ of $M_p$ is real. Hence
\begin{equation}\label{Eq. C in the proof of Theorem A}
\prod_{1\le k\le n,k\ne n,n/2}\(\sum_{1\le j\le n}\(\frac{1+j^2}{p}\)\chi_p^k(j^2)\)=\(\prod_{k=1}^{\frac{p-5}{4}}\lambda_k\)^2.
\end{equation}
Now we claim that $\prod_{k=1}^{\frac{p-5}{4}}\lambda_k\in\Z$. In fact, let $K$ be the cyclotomic field $\Q(\zeta_{p-1})$, where $\zeta_{p-1}$ is a primitive $(p-1)$-th root of unity.
Clearly we have $\lambda_k\in K\cap\R$. Given any isomorphism $\sigma_l$ in the Galois group $\Gal(K/\Q)$ with $\sigma_l(\zeta_{p-1})=\zeta_{p-1}^l$, we have
$$\sigma_{l}\(\prod_{k=1}^{\frac{p-5}{4}}\lambda_k\)
=\prod_{k=1}^{\frac{p-5}{4}}\(\sum_{1\le j\le n}\(\frac{1+j^2}{p}\)\chi_p^{2kl}(j)\).$$
One can easily verify that for any $1\le k,k_1,k_2\le\frac{p-5}{4}$ with $k_1\ne k_2$, all of the following are satisfied:

(1) $2kl\not\equiv 0\pmod{p-1}$,

(2) $2kl\not\equiv n\pmod{p-1}$,

(3) $2k_1l\not\equiv\pm2k_2l\pmod{p-1}$.

Hence for each $1\le k\le \frac{p-5}{4}$ there is a unique $1\le r\le\frac{p-5}{4}$ such that $2kl\equiv\pm 2r\pmod{p-1}$. As $\lambda_k\in K\cap\R$ and $(K\cap\R)/\Q$ is a Galois extension, we have
$$\(\sum_{1\le j\le n}\(\frac{1+j^2}{p}\)\chi_p^{2kl}(j)\)=\sigma_l(\lambda_k)\in\R$$
and hence
$$\(\sum_{1\le j\le n}\(\frac{1+j^2}{p}\)\chi_p^{2kl}(j)\)=\(\sum_{1\le j\le n}\(\frac{1+j^2}{p}\)\chi_p^{-2kl}(j)\).$$
By this and the above, we obtain
$$\prod_{k=1}^{\frac{p-5}{4}}\(\sum_{1\le j\le n}\(\frac{1+j^2}{p}\)\chi_p^{2kl}(j)\)=
\prod_{r=1}^{\frac{p-5}{4}}\(\sum_{1\le j\le n}\(\frac{1+j^2}{p}\)\chi_p^{2r}(j)\),$$
i.e.,
$$\sigma_l\(\prod_{k=1}^{\frac{p-5}{4}}\lambda_k\)=\prod_{k=1}^{\frac{p-5}{4}}\lambda_k.$$
Hence $\prod_{k=1}^{\frac{p-5}{4}}\lambda_k\in\Q$ by the Galois theory. As $\prod_{k=1}^{\frac{p-5}{4}}\lambda_k$ is an algebraic integer, we further obtain that $\prod_{k=1}^{\frac{p-5}{4}}\lambda_k\in\Z$. Hence by (\ref{Eq. S(1,p)})--(\ref{Eq. C in the proof of Theorem A}) we obtain that $S(1,p)/a$ is an integral square.

Now we consider $S(d,p)$. If $p\mid d$, then clearly $S(d,p)=0$. If $(\frac{d}{p})=-1$, then by \cite[Theorem 1.2]{ffa1} we know that $S(d,p)=0$. Suppose now that $d$ is a quadratic residue modulo $p$. Then clearly we have
$$S(d,p)=\sgn(\pi_p(d))S(1,p).$$
Now our desired result follows from Lemma \ref{Lemma permutation}.\qed

We now prove our next result.

{\bf Proof of Corollary \ref{corollary A}.} By \cite[Theorem 6.2.9]{BEW} for any $1\le i,j\le n$ we have
$$\sum_{1\le i\le n}\(\frac{i^2+j^2}{p}\)\(\frac{i}{p}\)=-a\(\frac{j}{p}\)$$
and hence
\begin{equation}\label{Eq. A in the proof of corollary A}
-\sum_{2\le i\le n}\(\frac{i^2+j^2}{p}\)\(\frac{i}{p}\)-a\(\frac{j}{p}\)=\(\frac{1+j^2}{p}\).
\end{equation}
By this we have
$$S^*(1,p)=\frac{-1}{a}\left| \begin{array}{cccccccc}
-a(\frac{1}{p}) & -a(\frac{2}{p}) & \ldots  & -a(\frac{n}{p})  \\
(\frac{2^2+1^2}{p}) & (\frac{2^2+2^2}{p}) &  \ldots  &  (\frac{2^2+n^2}{p})\\
\vdots & \vdots & \ddots  & \vdots    \\
(\frac{n^2+1^2}{p})& (\frac{n^2+2^2}{p}) & \ldots &  (\frac{n^2+n^2}{p})\\
\end{array} \right|=-S(1,p)/a.$$
The last equality follows from (\ref{Eq. A in the proof of corollary A}). Now our desired result follows from Theorem \ref{theorem A}.\qed

\Ack\ This research was supported by the National Natural Science Foundation of
China (Grant No. 11971222).


\begin{thebibliography}{99}
\bibitem{BEW}  B. C. Berndt, R. J. Evans and K. S. Williams, Gauss and Jacobi Sums,
Wiley, New York, 1998.
\bibitem{carlitz}  L. Carlitz, Some cyclotomic matrices, Acta Arith. 5 (1959) 293--308.
\bibitem{chapman} R. Chapman, Determinants of Legendre symbol matrices, Acta Arith. 115 (2004) 231--244.
\bibitem{evil} R. Chapman, My evil determinant problem, preprint, December 12, 2012, available at http://empslocal.ex.ac.uk/people/staff/rjchapma/etc/evildet.pdf.
\bibitem{Hou} X.-D. Hou, Permutation polynomials over finite fields, A survey of recent advances, Finite Field Appl. 32 (2015), 82--119.
\bibitem{classical} K. Ireland, M. Rosen, A Classical Introduction to Modern Number Theory, second ed., Grad. Texts Math., vol. 84, Springer, New York, 1990.
\bibitem{K1} C. Krattenthaler, Advanced determinant calculus, S W min. Lothar. Comb. 42 (1999)
B42q.
\bibitem{K2}  C. Krattenthaler, Advanced determinant calculus: a complement, Linear Algebra
Appl. 411 (2005) 68--166.
\bibitem{ffa1} Z.-W. Sun, On some determinants with Legendre symbol entries, Finite Fields Appl. 56 (2019) 285--307.
\bibitem{ffa2} Z.-W. Sun, Quadratic residues and related permutations and identities, Finite Fields Appl. 59 (2019) 246--283.
\bibitem{M1} M. Vsemirnov, On the evaluation of R. Chapman's ``evil determinant", Linear Algebra Appl. 436 (2012) 4101--4106.
\bibitem{M2} M. Vsemirnov, On R. Chapman's ``evil determinant": case $p\equiv1\pmod 4$, Acta Arith. 159 (2013)
 331--344.
\end{thebibliography}
\end{document}